\newcommand{\dd}{\mathrm{d}}
\newcommand{\E}{\mathbb{E}}
\newcommand{\1}{\textbf{1}}
\newcommand{\R}{\mathbb{R}}
\newcommand{\sC}{\mathscr{C}}
\newcommand{\scal}[2]{\left\langle #1, #2 \right\rangle}
\newcommand{\red}{}
\DeclareMathOperator{\Var}{Var}
\DeclareMathOperator{\vol}{vol}
\def\thm@space@setup{%
  \thm@preskip=12pt plus 0pt minus 0pt
  \thm@postskip=0pt plus 0pt minus 0pt
}
\xpatchcmd{\proof}{6\p@\@plus6\p@\relax}{\z@skip}{}{}
\newtheorem{theorem}{Theorem}
\newtheorem{lemma}[theorem]{Lemma}
\theoremstyle{remark}
\newtheorem{remark}[theorem]{Remark}
\theoremstyle{definition}
\title{Stability of simplex slicing}
\author{Sergii Myroshnychenko}
\address{(SM) University of the Fraser Valley, Abbotsford, BC, Canada.\vspace*{-1em}}
\author{Colin Tang}
\author{Kateryna Tatarko}
\address{(KT) University of Waterloo, Waterloo, ON, Canada.}
\author{Tomasz Tkocz}
\address{(CT \& TT) Carnegie Mellon University; Pittsburgh, PA 15213, USA.}
\email{ttkocz@math.cmu.edu}
\thanks{SM was supported in part by NSERC Grant no 2024-05044. KT was supported in part by NSERC Grant no 2022-02961. CT and TT's research supported in part by NSF grant DMS-2246484.}
\date{\today}
\begin{document}

\begin{abstract} 
We establish dimension-free stability of Webb's sharp simplex slicing (1996). Incidentally, {\red we show the Lipschitzness of volume of central sections of arbitrary (not necessarily symmetric) convex bodies}. 
\end{abstract}

\maketitle

\bigskip

\begin{footnotesize}
\noindent {\em 2020 Mathematics Subject Classification.} Primary 52A40; Secondary 52B12.

\noindent {\em Key words. regular simplex, volumes of sections, stability, Lipschitzness of sections} 
\end{footnotesize}

\bigskip

\section{Introduction}

When a certain nontrivial inequality becomes known along with its precise equality conditions, a natural important next step often seems to be to investigate its stability properties, or quantitative versions. Bonnesen's inequality for the planar isoperimetric problem provides a good concrete example, see \cite{BZ, Gr}. In analytic and geometric contexts, this paradigm has been particularly fruitful in establishing new results as well as, perhaps even more significantly, developing new methods, see for instance {\red \cite{BBJ, CFP, FHT1, FHT2, FMP}} for a handful of recent results. 

This paper concerns stability estimates for volume of central hyperplane sections of the regular simplex. Such ``slicing'' inequalities have had a long and rich history, going back to Anthon Good's question from the 70s, who asked to determine subspaces of a given dimension that minimise the volume of sections of a centred unit cube. Besides its inherent geometric interest, this question had in fact origins and applications in Diophantine approximations and geometry of numbers, see, e.g. \cite{BV}. Important results include sharp bounds, in the case of symmetric convex bodies, for sections of $\ell_p$ balls (see \cite{Ball-cube, Bar, ENT, Hadw, Hen, Kol, MP, Vaa}), and in the case of nonsymmetric ones, only for the regular simplex (see \cite{W-phd, W}). For a comprehensive overview, we refer to the monograph \cite{Kol-mon} and the survey \cite{NT}. Despite these being decades old, only recently have quantitative results begun to develop, and only in the symmetric cases, only for hyperplane sections (see \cite{CNT, GTW}), with an interesting application in information theory (see \cite{MR}). 

This work continues the program and makes a first step in the nonsymmetric case of maximal volume sections of a simplex, which is far less understood than the symmetric case (needless to say, even the sharp lower bounds have not been available only until now, see the second author's recent work \cite{Tang}).

\section{Results}

We work in Euclidean spaces $\R^n$ equipped with the standard inner product $\scal{x}{y} = \sum_{j=1}^n x_jy_j$, the norm $|x| = \sqrt{\scal{x}{x}}$ and the standard basis $e_1, \dots, e_n$ ($e_j$ has $1$ at $j$-th coordinate and $0$ elsewhere). We put 
\[
\1_n = e_1 + \dots + e_n = (1,\dots,1)
\]
to be the vector with all $1$s. 

\subsection{Simplex slicing} 
We let
\[
\Delta_n = \text{conv}\{e_1, \dots, e_{n+1}\} = \left\{x \in \R^{n+1}, \ \sum_{j=1}^{n+1} x_j = 1, \ x_1, \dots, x_{n+1} \geq 0 \right\}
\]
be the $n$-dimensional regular simplex embedded in the hyperplane $\frac{1}{n+1}\1_{n+1} + \1_{n+1}^\perp$ in $\R^{n+1}$. We are interested in the volume of \emph{central} hyperplane sections of $\Delta_n$ of codimension $1$, that is $(n-1)$-dimensional sections passing through the barycentre $\frac{1}{n+1}\1_{n+1}$ of $\Delta_n$. Every such section is conveniently represented as $\Delta_n \cap a^\perp$, as $a$ ranges over all unit vectors in $\R^{n+1}$ orthogonal to $\1_{n+1}$, so with $\sum a_j = 0$. Using homogeneity of volume, we have the following probabilistic representation for the volume of slices,
\begin{equation}\label{eq:vol-formula}
\vol_{n-1}(\Delta_n \cap a^\perp) = \frac{\sqrt{n+1}}{(n-1)!}p_a(0), \qquad |a| = 1, \sum a_j = 0.
\end{equation}
Here and throughout,
\[
p_a \text{ is the probability density of } \sum_{j=1}^{n+1} a_jX_j,
\]
where $X_1, \dots, X_{n+1}$ are i.i.d. standard exponential random variables (with density $e^{-x}\1_{(0,\infty)}(x)$ on $\R$). For derivation, see Webb's paper \cite{W}, where he proved the following sharp upper bound on the volume of central slices
\begin{equation}\label{eq:Webb-main}
p_a(0) \leq \frac{1}{\sqrt{2}}
\end{equation}
with equality if and only if $a = \frac{e_j-e_k}{\sqrt{2}}$ for some $j \neq k$ (corresponding to the hyperplanes containing all but two vertices of $\Delta_n$). We obtain a refinement with optimal behaviour near the extremisers.

\begin{theorem}\label{thm:stab-sec}
Let $a = (a_1, \dots, a_{n+1})$ be a unit vector in $\R^{n+1}$ with $\sum a_j = 0$ and $a_1 \geq a_2 \geq \dots \geq a_{n+1}$. Set 
\begin{equation}\label{eq:def-delta}
\delta(a) = \left|a - \frac{e_1-e_{n+1}}{\sqrt{2}}\right|^2.
\end{equation}
Then
\begin{equation}\label{eq:stab-main}
p_a(0) \leq \frac{1}{\sqrt{2}} - \begin{cases} \frac{1}{10}\sqrt{\delta(a)}, & \delta(a) \leq \frac{1}{2000}, \\ 2\sqrt{2}\cdot 10^{-5}, & \delta(a) > \frac{1}{2000}. \end{cases}
\end{equation}
\end{theorem}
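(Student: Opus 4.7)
My strategy is to split on the size of $\delta(a)$ and to use a conditioning trick that separates the ``extreme'' coordinates $a_1,a_{n+1}$ (which alone carry the mass at the extremiser) from the ``middle'' coordinates $a_2,\dots,a_n$.

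Set $Y' := \sum_{j=2}^{n} a_j X_j$, independent of $X_1$ and $X_{n+1}$. The constraints $\sum a_j=0$ and $|a|=1$ force $a_1>0>a_{n+1}$, so $a_1X_1+a_{n+1}X_{n+1}$ has an asymmetric Laplace-type density $z\mapsto (a_1-a_{n+1})^{-1}e^{-g(-z)}$ with $g(y):=\max\bigl(-y/a_1,\, y/(-a_{n+1})\bigr)\geq 0$, and conditioning on $Y'$ gives the working formula
\begin{equation*}
p_a(0) \;=\; \frac{1}{a_1-a_{n+1}}\,\E\bigl[e^{-g(Y')}\bigr].
\end{equation*}
A short computation from $|a|^2=1$ and $\sum a_j=0$ yields $a_1-a_{n+1}=\sqrt{2}-\delta(a)/\sqrt{2}$, so the target $p_a(0)\leq\tfrac{1}{\sqrt{2}}-\eta$ is equivalent to $\E[e^{-g(Y')}]\leq (1-\delta/2)(1-\sqrt{2}\,\eta)$, and the whole problem reduces to bounding this single expectation.

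For the small-$\delta$ regime $\delta(a)\leq 1/2000$, I would use $e^{-g}\leq 1-g+\tfrac12 g^2$ and aim for a linear-in-$\sqrt{\delta}$ lower bound on $\E g(Y')$, while controlling the $g^2$ correction by $\E(Y')^2=\sum_{j\geq 2}a_j^2+(\sum_{j\geq 2}a_j)^2$. Since both slopes of $g$ are $\sqrt{2}+O(\sqrt{\delta})$ near the extremiser, matters reduce to the key anti-concentration estimate $\E|Y'|\geq c_0\sqrt{\delta(a)}$. I would prove it by a dichotomy: the identity $\delta=\tfrac{1}{2}(\sum_{j\geq 2}a_j)^2+\sum_{j\geq 2}a_j^2+O(\delta^2)$ (obtained by parametrising $a_1=\tfrac{1}{\sqrt{2}}+\alpha$, $a_{n+1}=-\tfrac{1}{\sqrt{2}}+\beta$ and eliminating $\alpha,\beta$ via $|a|=1$ and $\sum a_j=0$) says that either the mean $|\E Y'|=|\sum_{j\geq 2}a_j|$ is already of order $\sqrt{\delta}$ (in which case $\E|Y'|\geq|\E Y'|$ suffices), or $\Var(Y')=\sum_{j\geq 2}a_j^2$ is, and a standard log-concave anti-concentration bound gives $\E|Y'-\E Y'|\gtrsim\sqrt{\Var(Y')}$. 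The numerical constant $1/10$ is then a matter of explicit bookkeeping.

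For the large-$\delta$ regime $\delta(a)>1/2000$ one needs a dimension-independent constant gap. My approach is a parallel dichotomy, this time based on the middle variance $\sigma^2:=\sum_{j=2}^{n}a_j^2$. If $\sigma^2$ is bounded below by a fixed constant, then the log-concavity of $Y'$ combined with a quantitative anti-concentration estimate forces $\pp(g(Y')\geq 1)$ to be uniformly bounded below, which gives a uniform gap in $\E[e^{-g(Y')}]$. Otherwise $\sigma^2$ is small, in which case $\delta$ is concentrated in $(\sum_{j\geq 2}a_j)^2$ and in the deviations of $a_1,a_{n+1}$ from $\pm 1/\sqrt{2}$; then $a_1-a_{n+1}$ is pushed strictly below $\sqrt{2}$ and the prefactor $1/(a_1-a_{n+1})$ alone, combined with $\E[e^{-g(Y')}]\leq 1$, yields the required gap.

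\textbf{Main obstacle.} The hardest part is the dimension-independence in the large-$\delta$ regime: one must rule out that arbitrarily many small middle coordinates could collectively drive $\E[e^{-g(Y')}]$ to $1$ while $\delta(a)$ stays bounded below, which requires a genuinely uniform anti-concentration bound for the log-concave density of $Y'$ that is specific to sums of scaled standard exponentials. The relatively weak constant $2\sqrt{2}\cdot 10^{-5}$, two orders of magnitude smaller than the value $\tfrac{1}{10}\sqrt{1/2000}$ one would get from continuous matching at $\delta=1/2000$, is a clear hint that any dimension-free argument in this regime must sacrifice sharpness.
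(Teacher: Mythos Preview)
Your small-$\delta$ plan is close to the paper's and would work with careful bookkeeping. Both you and the paper start from the same identity $p_a(0)=\frac{1}{u+v}\,\E[e^{-g(Y')}]$ (with $u=a_1$, $v=-a_{n+1}$) and then split on the size of $\sigma^2=\Var(Y')$. The paper, however, does not Taylor-expand $e^{-g}$: in the case $\sigma\ge\sqrt{\delta/2}$ it solves the exact extremal problem $\sup\{\int f g_{u,v}:\|f\|_\infty\le 1/\sigma\}$ (a rearrangement argument, their Lemma~\ref{lm:max-g_ab}), and in the case $\sigma<\sqrt{\delta/2}$ it abandons the conditioning formula entirely and uses Webb's log-concavity bound $p_a(0)\le 1/(2u)$ after showing $u\ge \tfrac{1}{\sqrt2}+c\sqrt\delta$. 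Your Taylor/anti-concentration route can be pushed through (the key fact $\E|Y'|\ge c\sigma$ follows from $\|f_{Y'}\|_\infty\le 1/\sigma$), but the paper's split gives cleaner constants.

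The large-$\delta$ part, however, has a real gap. Two problems:

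\emph{(i) The ``$\sigma^2$ small'' sub-case is stated backwards.} You write that $a_1-a_{n+1}<\sqrt2$ and hence the prefactor $1/(a_1-a_{n+1})$ ``alone, combined with $\E[e^{-g(Y')}]\le 1$, yields the required gap.'' But $a_1-a_{n+1}<\sqrt2$ means $1/(a_1-a_{n+1})>1/\sqrt2$, so this gives only $p_a(0)\le 1/(a_1-a_{n+1})>1/\sqrt2$ --- no gap at all.

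\emph{(ii) The ``$\sigma^2$ bounded below'' sub-case fails for spread-out vectors.} Take $a_j=\pm 1/\sqrt{n+1}$ (half of each sign). Then $\sigma^2\to 1$ is certainly bounded below, but $u+v=2/\sqrt{n+1}\to 0$, so your bound $p_a(0)\le \tfrac{1}{u+v}(1-\text{const})$ blows up. The obstacle you name --- that many small middle coordinates might push $\E[e^{-g(Y')}]$ to $1$ --- is not the right one: in this example $\E[e^{-g(Y')}]$ is actually tiny (of order $1/\sqrt{n}$), and it is the prefactor $1/(u+v)$ that is huge. Your anti-concentration bound captures nothing of the delicate balance between the two.

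This is exactly why the paper's large-$\delta$ argument looks completely different. It splits on the size of $u=a_1$ rather than $\sigma$. If $u>\sqrt{\tfrac12+\eta}$, Webb's bound $p_a(0)\le 1/(2u)$ suffices. If $u\le\sqrt{\tfrac12+\eta}$ and all $|a_j|$ are bounded away from $1/\sqrt2$, the paper invokes the Fourier-analytic inequality $p_a(0)\le \tfrac{1}{\sqrt2}\prod_j\Psi(a_j^2)^{a_j^2}$ with $\Psi(x)=\tfrac{\sqrt2}{\pi}\int_0^\infty(1+xt^2)^{-1/(2x)}\dd t$ strictly increasing and $\Psi(\tfrac12)=1$; this is what handles the spread-out case, and it has no analogue in your proposal. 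The remaining sliver (two positive coordinates both near $1/\sqrt2$) is dispatched by the elementary bound $\|f_{a_1X_1+a_kX_k}\|_\infty\le 1/(e\min(a_1,a_k))$. You will need an ingredient of Fourier-analytic type (or some other dimension-free device that does not factor through $1/(u+v)$) to close the large-$\delta$ case.
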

In particular, since $\sqrt{\delta(a)} \leq \sqrt{2}$, we have $p_a(0) \leq \frac{1}{\sqrt2} - 2\cdot 10^{-5}\sqrt{\delta(a)}$, for every unit vector $a$ as in Theorem \ref{thm:stab-sec}.

As for the stability of cube slicing (see (5) in \cite{CNT}), the deficit term is of the linear order in the Euclidean distance $\sqrt{\delta(a)}$ to the extremiser. The proof however is quite different for a fundamental reason: whilst for the cube there is a curious probabilistic formula for sections (in terms of negative moments, see e.g. (1) in \cite{CKT}) driving the main bootstrapping/self-improving argument, for the simplex, we have not been able to find a good analogue of such a formula, perhaps for the lack of symmetry (one may write a negative moment formula using Lemma 39 from \cite{NT}, but it is not clear how to ``evaluate'' or efficiently work with the limit). As a result, we have had to settle with working with \eqref{eq:vol-formula}. Our proof combines several \emph{ad hoc} arguments leveraging the exponential distribution as well as a solution to one rather standard optimisation problem over all probability densities on $\R$ with $L_\infty$ norm constrained. Webb gave in fact two completely different proofs of his upper bound \eqref{eq:Webb-main}: one Fourier-analytic and one relying solely on log-concavity. We crucially need to incorporate improvements coming from \emph{both} approaches, depending on {\red the} \emph{shape} of the outer-normal vector $a$.

We can check that the deficit term $\Theta(\sqrt{\delta(a)})$ is optimal by looking at the case $n=2$: if for instance we consider for small $\epsilon > 0$, sections by $a(\epsilon) = \sqrt{1-\epsilon}\frac{e_1-e_2}{\sqrt2} + \sqrt{\epsilon}\frac{e_1+e_2-2e_3}{\sqrt{6}}$, we get $\delta(a(\epsilon)) = \Theta(\epsilon)$ {\red (by a direct calculation)} and $p_{a(\epsilon)}(0) =  1 - \Theta(\sqrt{\epsilon})$ {\red (using \eqref{eq:vol-formula}, by an elementary geometric calculation of the segment length arising by intersecting the equilateral triangle by an appropriate line)}.

Within our method, we do not strive to derive best possible numerical values of the involved constants, giving priority to clarity of our arguments.

\subsection{Lipschitzness}
In the proof of the stability of cube slicing from \cite{CNT}, it was useful to know that the section function $a \mapsto \vol_{n-1}([-\tfrac12, \tfrac12]^n \cap a^\perp)$ is Lipschitz on the unit sphere $S^{n-1}$ in $\R^n$ (as a consequence of Busemann's theorem that requires the symmetry of the set). For a symmetric convex body $K$, the estimate on the Lipschitz constant of the function $E \mapsto \vol_{n-\ell} (K \cap E^\perp)$ where $E$ is an $\ell$-dimensional linear subspace is also known (see, for example, Proposition 2.3 in~\cite{ABB}). Investigating the stability of simplex slicing, we got naturally curious whether such property continues to hold for the simplex, or more generally for all convex bodies. It turns out that the answer is affirmative and is well-known to the experts in the field. As we have not found any specific reference for the {\red nonsymmetric} case, we include a~straightforward argument which relies on a functional version of Busemann's theorem (which in fact holds without symmetry/evenness assumptions) and application of {\red recent extensions \cite{MNRY, MSZ} of Gr\"unbaum's classical result from \cite{Gru}}.

We recall that a convex body in $\R^n$ is a compact convex set with nonempty interior. A convex body $K$ in $\R^n$ is called \emph{centred} if its \emph{barycentre}, $\text{bar}(K) = \frac{1}{\vol_n(K)}\int_K x \dd x$ is at the origin. The convex body $K$ is called \emph{isotropic} if it has volume $1$, is centred and its moment of inertia matrix is proportional to the identity,
\[
\left[\int_K x_ix_j \dd x\right]_{i,j \leq n} = L_K^2I_{n \times n},
\]
for a positive constant $L_K$ referred to as the \emph{isotropic constant} of $K$. 

A nonnegative function $f$ on $\R^\ell$ is called log-concave if $f = e^{-V}$ for a convex function $V\colon \R^\ell \to (-\infty, +\infty]$. It is a probability density if $\int_{\R^\ell} f = 1$ and then it is called \emph{centred} if $\int_{\R^\ell} xf(x) \dd x = 0$. The log-concave probability density $f$ is called \emph{isotropic} if it is centred and  its covariance matrix is the identity, $\int_{\R^\ell} x_ix_j f(x) \dd x = \delta_{ij}$. Then its \emph{isotropic constant} is defined to be $\|f\|_\infty^{1/\ell}$. The central problem in asymptotic convex geometry has been to estimate the following quantity
\begin{equation}\label{eq:Cl}
\sC_\ell = \sup \|f\|_\infty^{1/\ell},
\end{equation}
where the supremum is taken over all log-concave isotropic probability density functions $f$ on $\R^\ell$.
For comprehensive background, consult for instance Chapter 2 in~\cite{BGVV}. {\red The very recent exciting breakthrough results of Guan, as well as Klartag and Lehec, \cite{Gu, KL}, culminate in the assertion that $\sC_\ell = O(1)$.}

For two subspaces $E, F$ in $\R^n$ of the same dimension, we use the distance 
\[
d(E, F) = \|P_E - P_F\|_{HS},
\]
given by the Hilbert-Schmidt norm $\|P_E - P_F\|_{HS}$ of the difference between the orthogonal projection operators $P_E$ and $P_F$ onto $E$ and $F$ respectively.

\begin{theorem}\label{thm:sec-codim1}
Let $K$ be an isotropic convex body in $\R^n$. Let $1 \leq \ell < n$. For every $\ell$-dimensional subspaces $E, F$ in $\R^n$, we have
\[
\big| \vol_{n-\ell}(K \cap E^\perp) - \vol_{n-\ell}(K \cap F^\perp) \big| \leq e^{5\ell}\frac{\sC_\ell^{2\ell}}{L_K^\ell}d(E, F).
\]
\end{theorem}

\begin{remark}
It is well-known that for every isotropic convex body $K$ and the Euclidean ball $B$ of unit volume, $L_K \geq L_B \geq \frac{1}{12}$ (see for instance Lemma 6 in \cite{Ball}). {\red Together with $\sC_\ell = O(1)$, we get that the Lipschitz constant in Theorem \ref{thm:sec-codim1} is upper bounded by $C^\ell$ with a universal constant $C > 0$. This however seems suboptimal when $\ell = \omega(1)$ (in the extreme case, for line sections, i.e. when $\ell = n-1$, the Lipschitz constant can be bounded by $O(n^2)$, using standard arguments). It is perhaps an interesting problem to determine the optimal dependence.}
\end{remark}

\section{Proof of simplex slicing: Theorem \ref{thm:stab-sec}}

\subsection{Auxiliary results}

We begin with recording several elementary observations. By a standard exponential random variable, we mean a real-valued random variable with density $e^{-x}\1_{(0,\infty)}(x)$ on $\R$.

\begin{lemma}\label{lm:density2}
Suppose $X, Y$ are i.i.d. standard exponential random variables {\red and let} $a, b > 0$. Then $aX - bY$ has density
\begin{equation}\label{eq:g_ab}
g_{a,b}(x) = \frac{1}{a+b}\left(e^{-x/a}\1_{x \geq 0} + e^{x/b}\1_{x<0}\right).
\end{equation}
\end{lemma}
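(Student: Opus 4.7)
The plan is entirely routine: compute the density of $aX-bY$ by convolution. Since $X, Y$ are independent with densities $f_X(s) = e^{-s}\1_{s>0}$ and $f_Y(t) = e^{-t}\1_{t>0}$, the random variables $aX$ and $-bY$ (both with $a,b > 0$) have densities
\[
f_{aX}(s) = \frac{1}{a}e^{-s/a}\1_{s>0}, \qquad f_{-bY}(s) = \frac{1}{b}e^{s/b}\1_{s<0}.
\]
By independence, the density of their sum $aX-bY$ at a point $x \in \R$ is
\[
g_{a,b}(x) = \int_{\R} f_{aX}(x+bt)f_Y(t)\, \dd t = \frac{1}{a}\int_0^\infty e^{-(x+bt)/a}\,\1_{x+bt>0}\,e^{-t}\,\dd t.
\]

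Then I would split into two cases according to the sign of $x$. For $x \ge 0$, the indicator is automatic and the integral reduces to $\int_0^\infty e^{-(1+b/a)t}\,\dd t = \frac{a}{a+b}$, which yields $g_{a,b}(x) = \frac{1}{a+b}e^{-x/a}$. For $x < 0$, the indicator forces $t > -x/b$, and the resulting integral $\int_{-x/b}^\infty e^{-(1+b/a)t}\,\dd t = \frac{a}{a+b}e^{-(1+b/a)(-x/b)}$ combines with the prefactor $\frac{1}{a}e^{-x/a}$ to give, after the exponents $-x/a + x/a + x/b = x/b$ cancel properly, the value $\frac{1}{a+b}e^{x/b}$.

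Assembling both cases produces the claimed formula \eqref{eq:g_ab}. There is no genuine obstacle here; the only points that require minor care are keeping track of the indicator $\1_{x+bt>0}$ when $x<0$ and verifying the exponent cancellation in that regime. As a sanity check one can note that $g_{a,b}$ integrates to $1$ (splitting $\int_0^\infty e^{-x/a}\dd x + \int_{-\infty}^0 e^{x/b}\dd x = a+b$) and that taking $a=b=1$ recovers the symmetric Laplace density $\tfrac12 e^{-|x|}$, which is the classical distribution of the difference of two i.i.d.\ exponentials.
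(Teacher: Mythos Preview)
Your proof is correct and takes essentially the same approach as the paper: both compute the density by a direct convolution, splitting into the cases $x\ge 0$ and $x<0$. The only cosmetic difference is the parametrisation of the convolution integral (you integrate against $f_Y$, the paper integrates against $f_{-bY}$), which makes no substantive difference.
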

\begin{proof}
The densities of $aX$ and $-bY$ are $\frac{1}{a}e^{-x/a}\1_{x>0}$ and $\frac{1}{b}e^{x/b}\1_{x<0}$, respectively. By independence, the density is given by convolution, so for $x \geq 0$, we have
\[
g_{a,b}(x) = \frac{1}{ab}\int_x^\infty e^{-t/a}e^{(x-t)/b} \dd t = \frac{1}{ab}e^{x/b}\frac{1}{1/a+1/b}e^{-(1/a+1/b)x} = \frac{1}{a+b}e^{-x/a}.
\]
For $x < 0$, we proceed similarly.
\end{proof}

\begin{lemma}\label{lm:density2max}
Suppose $X, Y$ are i.i.d. standard exponential random variables {\red and let} $a, b > 0$.  Then $aX + bY$ has density on $\R$ uniformly upper bounded by $\frac{1}{e}\frac{1}{\min\{a,b\}}$.
\end{lemma}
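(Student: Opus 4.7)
By the symmetric role of $a$ and $b$, we may assume $a \leq b$, so $\min\{a,b\} = a$. The strategy is a direct computation: convolve the densities of $aX$ and $bY$ to obtain the density $h$ of $aX + bY$ explicitly, locate its maximum, and then reduce the resulting bound to a classical elementary inequality for the logarithm.

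First, I would treat the generic case $a < b$. Convolving $\tfrac{1}{a}e^{-x/a}\1_{x>0}$ with $\tfrac{1}{b}e^{-x/b}\1_{x>0}$ (much in the spirit of Lemma \ref{lm:density2}) gives
\[
h(x) = \frac{e^{-x/b} - e^{-x/a}}{b - a}\1_{x \geq 0}.
\]
Differentiating, $h'(x) = 0$ has the unique solution $x^\ast = \frac{ab}{b-a}\ln\frac{b}{a}$, and one verifies that $h$ is indeed maximised there. Substituting and simplifying the exponents gives the clean expression
\[
h(x^\ast) = \frac{1}{b}\left(\frac{a}{b}\right)^{a/(b-a)}.
\]

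Next I would reduce the target inequality $h(x^\ast) \leq \frac{1}{ea}$ to an elementary statement. Multiplying through by $ea/b$ and combining exponents, it is equivalent to $\left(\tfrac{a}{b}\right)^{b/(b-a)} \leq \tfrac{1}{e}$. Setting $s = a/b \in (0,1)$, this reads $\frac{-\ln s}{1 - s} \geq 1$, i.e.\ $\ln s \leq s - 1$, which is the standard tangent-line bound for $\ln$ at $1$. This completes the case $a < b$.

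Finally, the boundary case $a = b$ follows either by letting $b \downarrow a$ in the formula above, or, more cleanly, directly: the density of $aX + aY$ is the $\mathrm{Gamma}(2,1/a)$ density $\tfrac{x}{a^2}e^{-x/a}\1_{x \geq 0}$, which is maximised at $x = a$ with value $\tfrac{1}{ea}$. In particular, the bound is sharp, attained on the diagonal $a = b$. I do not anticipate any obstacle: the only nontrivial point is carrying out the algebra in the second step carefully enough to recognise that the remaining inequality is simply $\ln s \leq s-1$.
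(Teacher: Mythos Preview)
Your proof is correct, but it takes a longer route than the paper's. The paper also assumes $a<b$ and writes the convolution as $\frac{1}{ab}e^{-x/b}\int_0^x e^{-t(1/a-1/b)}\,\dd t$, but instead of evaluating it exactly and optimising, it simply bounds the integrand by $1$ (since $1/a-1/b>0$), obtaining $h(x)\le \frac{1}{a}\cdot\frac{x}{b}e^{-x/b}$, and then invokes $ye^{-y}\le e^{-1}$. This is a two-line argument with no critical-point computation and no algebraic reduction step. Your approach, by contrast, computes the exact maximum $h(x^\ast)=\tfrac{1}{b}(a/b)^{a/(b-a)}$ and then reduces the comparison to $\ln s\le s-1$; amusingly, the paper's inequality $ye^{-y}\le e^{-1}$ is the same elementary fact in disguise. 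What you gain is the precise value of $\|h\|_\infty$ and the observation that the bound is attained at $a=b$ (which the paper's crude estimate does not reveal); what you lose is brevity.
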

\begin{proof}
Without loss of generality, we assume that $a < b$. The random variable $aX + bY$ is positive a.s. and at $x > 0$ has density
\[
\int_0^x \frac{1}{ab}e^{-t/a-(x-t)/b}\dd t = \frac{1}{ab}e^{-x/b}\int_0^x e^{-t(1/a-1/b)} \dd t \leq \frac{1}{ab}e^{-x/b}\int_0^x \dd t = \frac{1}{a}\frac{x}{b}e^{-x/b}.
\]
The standard inequality $ye^{-y} \leq e^{-1}$, $y \geq 0$, gives the result.
\end{proof}

We shall need a solution to a maximisation problem involving integration against densities $g_{a,b}$.

\begin{lemma}\label{lm:max-g_ab}
Let $a, b, C > 0$. Let $g_{a,b}$ be the probability density defined in \eqref{eq:g_ab}. For every probability density $f$ on $\R$ with $\|f\|_\infty \leq C$, we have
\[
\int_{\R} f\cdot g_{a,b} \leq C\left(1 - \exp\left\{-\frac{1}{C(a+b)}\right\}\right).
\]
\end{lemma}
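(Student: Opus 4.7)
The stated inequality is a textbook instance of the \emph{bathtub principle}: among probability densities $f$ with $f \leq C$ a.e., the integral $\int f g_{a,b}$ is maximised by pushing the mass of $f$ onto the largest super-level set of $g_{a,b}$ allowed by the constraint. I would first introduce the candidate maximiser $f^* = C\,\1_{A^*}$, where $A^* = \{g_{a,b} > t^*\}$ and $t^* > 0$ is chosen so that $|A^*| = 1/C$ (so that $f^*$ is itself a probability density). Optimality of $f^*$ follows from the one-line identity
\[
\int f g_{a,b} - \int f^* g_{a,b} = \int (f-f^*)(g_{a,b}-t^*),
\]
which uses $\int f = \int f^* = 1$, combined with the pointwise inequality $(f-f^*)(g_{a,b}-t^*) \leq 0$: on $A^*$ one has $f^* = C \geq f$ while $g_{a,b} \geq t^*$, and on the complement $f^* = 0 \leq f$ while $g_{a,b} \leq t^*$.

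Next I would identify $A^*$ explicitly. Since $g_{a,b}$ is unimodal at $0$, decreasing like $e^{-x/a}/(a+b)$ on $x \geq 0$ and like $e^{x/b}/(a+b)$ on $x \leq 0$, its super-level sets are the intervals
\[
\{g_{a,b} > t\} = \bigl(b\ln((a+b)t),\ -a\ln((a+b)t)\bigr), \qquad 0 < t < \tfrac{1}{a+b},
\]
of Lebesgue measure $-(a+b)\ln((a+b)t)$. Solving $-(a+b)\ln((a+b)t^*) = 1/C$ yields $t^* = \frac{1}{a+b}\exp\bigl(-\tfrac{1}{C(a+b)}\bigr)$, which lies in $(0, 1/(a+b))$ for every $C>0$.

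Finally, with $s := 1/(C(a+b))$ so that $A^* = (-bs, as)$, a direct computation splitting at the origin gives
\[
\int_{A^*} g_{a,b} = \frac{1}{a+b}\int_0^{as} e^{-x/a}\,\mathrm{d}x + \frac{1}{a+b}\int_{-bs}^0 e^{x/b}\,\mathrm{d}x = \frac{a(1-e^{-s})+b(1-e^{-s})}{a+b} = 1 - e^{-s},
\]
and multiplying by $C$ recovers the claimed bound. There is no genuine obstacle here: the only care point is checking that $|A^*|$ can be prescribed to be exactly $1/C$, which works for every $C>0$ because $|\{g_{a,b}>t\}|$ depends continuously on $t$ and sweeps out all of $(0,\infty)$ as $t$ ranges over $(0,1/(a+b))$.
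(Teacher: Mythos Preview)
Your proof is correct and follows essentially the same approach as the paper: both construct the same extremiser $f^* = C\,\1_{[-\beta,\alpha]}$ with $\alpha = \frac{a}{C(a+b)}$, $\beta = \frac{b}{C(a+b)}$, prove its optimality by comparing against the threshold value $t^* = g_{a,b}(\alpha) = g_{a,b}(-\beta)$ (the paper calls this $m$), and then compute $\int_{[-\beta,\alpha]} g_{a,b}$ directly. Your framing via the bathtub principle and the identity $\int(f-f^*)(g_{a,b}-t^*)$ is a slightly cleaner packaging of the same mass-rearrangement argument the paper writes out piecewise.
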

\begin{proof}
Let $\alpha, \beta > 0$ be such that $C(\alpha + \beta) = 1$ and $\frac{\alpha}{a} = \frac{\beta}{b}$, the second condition being equivalent to $g_{a,b}(\alpha) = g_{a,b}(-\beta)$. Then $\alpha, \beta$ are uniquely determined in terms of $a,b, C$, namely
\[
\alpha = \frac{1}{C}\frac{a}{a+b}, \qquad \beta = \frac{1}{C}\frac{b}{a+b}.
\]
We consider the uniform density,
\[
f^*(x) = C\1_{[-\beta, \alpha]}(x).
\]
We shall argue that in order to maximise $\int fg_{a,b}$ subject to $\|f\|_\infty \leq C$, it is beneficial to move mass so that $f$ becomes $f^*$. Denote $m = g_{a,b}(\alpha) = g_{a,b}(-\beta)$ and observe that by the monotonicity of $g_{a,b}$, we have
\[
\min_{[-\beta,\alpha]} g_{a,b} = m = \max_{\R \setminus [-\beta,\alpha]} g_{a,b}.
\]
We break the integral into two bits,
\[
\int_{\R} (f^*-f)g_{a,b} = \int_{[-\beta,\alpha]} (f^*-f)g_{a,b}  + \int_{\R\setminus [-\beta,\alpha]}(f^*-f)g_{a,b}.
\]
Note that on $[-\beta,\alpha]$, we have $f^*-f  = C-f \geq 0$ and $g_{a,b} \geq m$, whilst on $\R\setminus [-\beta,\alpha]$, we have $f^*-f = -f \leq 0$ and $g_{a,b} \leq m$. Thus,
\[
\int_{\R} (f^*-f)g_{a,b} \geq \int_{[-\beta,\alpha]} (f^*-f)m  + \int_{\R\setminus [-\beta,\alpha]}(f^*-f)m = m \int_{\R} (f^*-f) = 0.
\]
As a result,
\[
\int_{\R} f\cdot g_{a,b} \leq \int_{\R} f^*\cdot g_{a,b} = C\int_{[-\beta,\alpha]}g_{a,b} = C\left(1 - \exp\left\{-\frac{1}{C(a+b)}\right\}\right),
\]
which finishes the proof.
\end{proof}

To employ the previous lemma, we shall use the following sharp bound on $L_\infty$ norms of log-concave densities in terms of their variance. 

\begin{lemma}[Bobkov-Chistyakov \cite{BCh}, Fradelizi \cite{Fr}]\label{lm:Bobkov}
For a log-concave probability density $f$ on $\R$, we have
\[
\|f\|_\infty \leq \left(\int x^2f(x) \dd x - \left(\int x f(x) \dd x\right)^2\right)^{-1/2}.
\]
\end{lemma}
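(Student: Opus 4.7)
My plan is to reduce the inequality to an extremal problem by affine invariance and then solve it via a variational argument. Both $\|f\|_\infty$ and $\Var(f)$ are translation invariant, and under the dilation $f(x) \mapsto \lambda f(\lambda x)$ we have $\|f\|_\infty \to \lambda\|f\|_\infty$ while $\Var(f) \to \Var(f)/\lambda^2$. Hence the product $\|f\|_\infty^2\Var(f)$ is invariant, and I may assume that $f$ has mean $0$ and $\|f\|_\infty = 1$. The claim becomes $\int x^2 f(x)\,\dd x \leq 1$, and the extremal density should be the shifted one-sided exponential $f^*(x) = e^{-(x+1)}\1_{x \geq -1}$, for which $\|f^*\|_\infty = 1$, the mean is $0$, and the variance equals exactly $1$.

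To isolate this extremal shape, I would write $f = e^{-V}$ with $V \colon \R \to [0,+\infty]$ convex, and let $[x_-,x_+] = \{V = 0\}$ (possibly a single point) be the interval where $f$ attains its maximum value $1$. Replacing $V$ outside $[x_-,x_+]$ by its one-sided tangent lines at $x_\pm$ pointwise lowers $V$ and so pointwise raises $f$ in the tails, producing a ``trapezoid-exponential'' density: a plateau of height $1$ on $[x_-,x_+]$ with exponential decay at rates $a = V'(x_+^+)$ and $b = -V'(x_-^-)$ outside. The key monotonicity is that after renormalizing this new function back to a probability density and recentering to mean $0$, the resulting variance is at least as large as that of $f$. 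Once the problem is reduced to this four-parameter family, the constraints $\int f = 1$ and $\int xf = 0$ can be imposed explicitly, and the supremum of variance is pinned down to equal $1$, attained in the degenerate limit where the plateau shrinks to a point and one of the rates $a, b$ diverges.

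The main obstacle is the monotonicity step: after replacing $V$ by its tangent extensions the total integral grows, so the renormalization redistributes mass between the plateau and the tails in a subtle way, and the variance comparison is not a priori obvious. Verifying it (together with the mean-recentering correction) carefully is the crux. If a direct balayage argument proves awkward, one can instead set up the calculus-of-variations problem of maximizing $\int x^2 f$ over log-concave densities with $\|f\|_\infty \leq 1$ and $\int xf = 0$, write down the Euler--Lagrange/KKT conditions (the multipliers enforcing the linear constraints interact with the convexity constraint on $V$ to force $V$ to be piecewise affine with a single break at $\{V = 0\}$), and check that the only candidate optimum is the shifted one-sided exponential above.
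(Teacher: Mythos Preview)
The paper does not actually prove this lemma: it is quoted from the literature (Bobkov--Chistyakov and Fradelizi) and only accompanied by the one-line remark that equality holds for the one-sided exponential. So there is no ``paper's own proof'' to compare your proposal against; your write-up has to stand on its own.

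As a strategy, your affine normalisation and identification of the extremiser are correct, but the proposal does not close. Two concrete issues:

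\begin{enumerate}
\item \textbf{The monotonicity you state is not the one you need.} After replacing $V$ by its tangent lines you get $g\geq f$ with $\|g\|_\infty=1$ and total mass $M=\int g\geq 1$; the renormalised density $\tilde g=g/M$ has $\|\tilde g\|_\infty=1/M\leq 1$. Showing merely that $\Var(\tilde g)\geq \Var(f)$ is useless: within the trapezoid-exponential family one only has $\|\tilde g\|_\infty^2\Var(\tilde g)\leq 1$, i.e.\ $\Var(\tilde g)\leq M^2$, and chaining gives $\Var(f)\leq M^2$, not $\Var(f)\leq 1$. What you would actually need is monotonicity of the affine invariant itself, $\|f\|_\infty^2\Var(f)\leq \|\tilde g\|_\infty^2\Var(\tilde g)$, equivalently $M^2\Var(f)\leq \Var(\tilde g)$, and you neither state nor prove this. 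You flag the step as ``the crux'', which is honest, but as written the argument is a plan, not a proof.

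\item \textbf{The tangent replacement can fail to be integrable.} If the mode is a single point with $V'(x_+^+)=V'(x_-^-)=0$ (e.g.\ the Gaussian), your replacement gives a constant function, which is not a finite measure, so the reduction is undefined. A limiting or perturbative workaround is needed and not supplied.
\end{enumerate}

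Your fallback (KKT for maximising $\int x^2 f$ over log-concave $f$ with $\|f\|_\infty\leq 1$ and $\int xf=0$) is also only gestured at; making the convexity constraint on $V$ interact with the multipliers to force piecewise linearity is exactly where the work is, and you have not done it. If you want a route that actually closes, the literature proofs go via comparison of moments with the exponential on each half-line relative to the mode, or via Ball's one-dimensional lemma on $\Gamma(p+1)^{-1/p}\big(\int_0^\infty f(t)^p t^{p-1}\,\dd t\big)^{1/p}$ being monotone in $p$; either of these avoids the renormalisation trap above.
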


Note that this is sharp: the equality holds for a (one-sided) exponential density.

We record for future use the following Fourier-analytic bound (with the main idea going back to \cite{Ball-cube, Haa, Hen}).

\begin{lemma}\label{lm:Fourier}
For $0 < x < 1$, define
\[
\Psi(x) = \frac{\sqrt{2}}{\pi}\int_0^\infty (1+x t^2)^{-\frac{1}{2x}} \dd t.
\]
Then, with the notation of Theorem \ref{thm:stab-sec},
\[
p_a(0) \leq \frac{1}{\sqrt{2}}\prod_{j=1}^{n+1} \Psi(a_j^2)^{a_j^2}.
\]
Moreover, $\Psi(x)$ is strictly increasing, $\Psi(\frac12) = 1$ and
\[
\Psi(x) = \frac{1}{\sqrt{2\pi x}}\frac{\Gamma\left(\frac{1}{2x}-\frac{1}{2}\right)}{\Gamma\left(\frac{1}{2x}\right)}.
\]
\end{lemma}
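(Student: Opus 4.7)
The plan is to follow the classical Fourier-inversion-plus-H\"older route. Since a standard exponential has characteristic function $(1-it)^{-1}$, the random variable $\sum_j a_jX_j$ has characteristic function $\prod_{j=1}^{n+1}(1-ia_jt)^{-1}$, whose modulus equals $\prod_j(1+a_j^2t^2)^{-1/2}$. The constraints $\sum a_j=0$ and $|a|=1$ force at least two of the $a_j$ to be nonzero (a single nonzero coordinate would violate $\sum a_j=0$), which makes this product integrable on $\R$. Fourier inversion and the triangle inequality then give
$$p_a(0) \leq \frac{1}{2\pi}\int_\R \prod_{j=1}^{n+1}(1+a_j^2t^2)^{-1/2}\,\dd t.$$

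Next, since $\sum a_j^2=1$ and no $|a_j|$ can equal $1$ (such a value together with the two linear constraints on $a$ is impossible), H\"older's inequality applies with exponents $p_j = 1/a_j^2>1$ on the nonzero indices (with the convention $\Psi(0)^0=1$ handling any zero indices trivially), yielding
$$\int_\R \prod_j (1+a_j^2t^2)^{-1/2}\,\dd t \leq \prod_j \left(\int_\R (1+a_j^2t^2)^{-1/(2a_j^2)}\,\dd t\right)^{a_j^2}.$$
The change of variables $s=\sqrt{x}\,t$ recasts the definition of $\Psi$ as $\Psi(x) = \frac{\sqrt{2}}{\pi\sqrt{x}}\int_0^\infty (1+s^2)^{-1/(2x)}\,\dd s$, and the same substitution in each factor above identifies it (using evenness in $t$) with $\pi\sqrt{2}\,\Psi(a_j^2)$. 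Raising to $a_j^2$, multiplying, and consolidating the constants via $\sum a_j^2=1$ yields the bound $p_a(0) \leq \frac{1}{\sqrt 2}\prod_j \Psi(a_j^2)^{a_j^2}$.

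For the three stated properties of $\Psi$: the Gamma representation follows from the classical Beta-function identity $\int_0^\infty (1+u^2)^{-\alpha}\,\dd u = \frac{\sqrt\pi}{2}\,\Gamma(\alpha-\tfrac12)/\Gamma(\alpha)$ applied at $\alpha = 1/(2x) > 1/2$, whence $\Psi(1/2)=1$ is immediate. Strict monotonicity is obtained by differentiating under the integral: writing the integrand as $\exp g(x,t)$ with $g(x,t) = -\frac{1}{2x}\log(1+xt^2)$, one computes
$$\frac{\partial g}{\partial x} = \frac{1}{2x^2}\left[\log(1+xt^2) - \frac{xt^2}{1+xt^2}\right],$$
and setting $u=xt^2$ the bracket vanishes at $u=0$ while having derivative $u/(1+u)^2 > 0$, so it is strictly positive for $t > 0$, giving $\Psi'(x)>0$. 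The whole argument is routine; the only points requiring care are bookkeeping the substitutions so that all prefactors combine cleanly to $1/\sqrt{2}$, and invoking dominated convergence to justify differentiation under the integral (the decay $(1+xt^2)^{-1/(2x)}$ is more than sufficient on any compact subinterval of $(0,1)$).
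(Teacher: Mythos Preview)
Your proof is correct and follows essentially the same route as the paper: Fourier inversion for $p_a(0)$, the triangle inequality on the characteristic function $\prod_j(1-ia_jt)^{-1}$, H\"older with exponents $1/a_j^2$, and the same derivative computation $\partial_x\bigl(-\tfrac{1}{2x}\log(1+xt^2)\bigr)=\tfrac{1}{2x^2}\bigl[\log(1+xt^2)-\tfrac{xt^2}{1+xt^2}\bigr]>0$ for strict monotonicity. The only cosmetic differences are that the paper cites Webb for the H\"older step rather than spelling it out, and reduces the $\Gamma$-formula via the substitution $s=(1+xt^2)^{-1}$ instead of first rescaling to $\int_0^\infty(1+u^2)^{-\alpha}\dd u$ as you do; both lead to the same Beta-function evaluation.
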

\begin{proof}
The inequality follows from the Fourier inversion formula, 
\[
p_a(0) = \frac{1}{2\pi}\int_{\R} \phi_{\sum a_jX_j}(t) \dd t,
\]
where $\phi_{\sum a_jX_j}(t) = \E e^{it\sum a_jX_j} = \prod \frac{1}{1+ia_jt}$ is the characteristic function, and H\"older's inequality. For details, see Webb's derivation from \cite{W}, pp.23-24. To get the explicit formula for $\Psi$ in terms of the $\Gamma$-function, we use a standard change of variables $s = (1+xt^2)^{-1}$ to reduce the integral to the $B$-function. To see about monotonicity, observe that for every fixed $t>0$, the integrand $(1+x t^2)^{-\frac{1}{2x}}$ is strictly increasing as a function of $x$: for instance, we check that $(-\frac{1}{x}\log(1+xt^2))' = \frac{1}{x^2}(\log(1+xt^2) - \frac{xt^2}{1+xt^2}) > 0$.
\end{proof}

Finally, we shall also need Webb's bound relying on {\red the log-concavity of exponentials}.

\begin{lemma}[Webb, \cite{W}, p.25]\label{lm:a1}
With the notation of Theorem \ref{thm:stab-sec},
\[
p_a(0) \leq \frac{1}{2\max\{a_1, {\red -a_{n+1}}\}}.
\]
\end{lemma}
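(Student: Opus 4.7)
The plan is to isolate the variable with the largest weight and reduce $p_a(0)$ to a one-dimensional exponential-weighted integral of a log-concave density, then conclude via a single-crossing comparison against an exponential. Since $p_a(0) = p_{-a}(0)$, we may assume $a_1 \geq |a_{n+1}|$, so $\max\{a_1, |a_{n+1}|\} = a_1 > 0$. Writing $S = a_1 X_1 + T$ with $T = \sum_{j=2}^{n+1} a_j X_j$ independent of $X_1$, by Pr\'ekopa $f_T$ is log-concave with $\E[T] = -a_1$. The convolution formula together with the substitution $u = s/a_1$ yields
\[
p_a(0) = \int_0^\infty \frac{e^{-s/a_1}}{a_1}\, f_T(-s)\, ds = \frac{1}{a_1}\int_0^\infty e^{-u} f_U(u)\, du,
\]
where $U := -T/a_1$ has log-concave density on $\R$ and $\E[U] = 1$. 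The desired bound $p_a(0) \leq 1/(2a_1)$ thus reduces to the key inequality $I := \int_0^\infty e^{-u} f_U(u)\, du \leq \tfrac{1}{2}$.

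For the key inequality I would compare $f_U$ against the unit exponential density $\phi(u) = e^{-u}\1_{u \geq 0}$, for which $\int_0^\infty \phi(u) e^{-u}\,du = 1/2$ (the equality case). Log-concavity of $f_U$ gives concavity of $\log(f_U/\phi) = \log f_U + u$ on $[0,\infty)$, so $\{f_U > \phi\}$ is an interval there. To accommodate the possibility $\pp(U < 0) > 0$, I would condition on $\{U \geq 0\}$: writing $P = \pp(U \geq 0)$ and $M = \E[U \1_{U \geq 0}]$, one has $M \geq 1$ (since $\E[U\1_{U<0}]\leq 0$), and the conditional density is log-concave on $[0,\infty)$ with mean $M/P$. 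I would then establish the auxiliary inequality
\[
\int_0^\infty g(v)e^{-v}\,dv \leq \frac{1}{\mu+1} \quad \text{for every log-concave probability density } g \text{ on } [0,\infty) \text{ with mean } \mu,
\]
which yields $I \leq P^2/(M+P) \leq P^2/(1+P) \leq 1/2$, the last step from $(2P+1)(P-1) \leq 0$ on $[0,1]$.

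The main obstacle is the auxiliary inequality. The plan is to compare $g$ with $g^*(v) = \frac{1}{\mu}e^{-v/\mu}$ (the exponential with the same mean, giving equality). Log-concavity again makes $\{g > g^*\}$ an interval in $[0,\infty)$; the two constraints $\int(g - g^*) = 0$ and $\int v(g - g^*) = 0$ force this interval to be $[z_1, z_2]$ with $0 < z_1 < z_2 < \infty$ (in the nontrivial case $g \neq g^*$), so the antiderivative $F(v) := \int_0^v (g - g^*)\,du$ has a unique zero $z^* \in (z_1, z_2)$, with $F \leq 0$ on $[0, z^*]$ and $F \geq 0$ on $[z^*, \infty)$. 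Integration by parts transforms $\int_0^\infty (g - g^*) e^{-v}\, dv$ into $\int_0^\infty F(v) e^{-v}\, dv$; bounding $e^{-v}$ above by $e^{-z^*}$ on $[0, z^*]$ (where $F \leq 0$) and below by $e^{-z^*}$ on $[z^*, \infty)$ (where $F \geq 0$) collapses this to $e^{-z^*}\int_0^\infty F(v)\, dv = 0$, the latter vanishing by the mean constraint (via another integration by parts).
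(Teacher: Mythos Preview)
Your argument is correct. The paper itself does not supply a proof of this lemma; it simply quotes the bound from Webb's paper \cite{W}, p.~25, so there is nothing to compare against line by line. What can be said is that your route is very much in the spirit of the paper's description ``Webb's bound relying on log-concavity'': you isolate the dominant coefficient, use Pr\'ekopa to get a log-concave density for the remainder, and then solve the resulting one-dimensional extremal problem. The reduction
\[
p_a(0)=\frac{1}{a_1}\int_0^\infty e^{-u}f_U(u)\,du,\qquad \E[U]=1,
\]
and the conditioning step $I\le P^2/(M+P)\le P^2/(1+P)\le \tfrac12$ are clean and correct.

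The only place that deserves a small comment is the last comparison step. Your sentence ``bounding $e^{-v}$ above by $e^{-z^*}$ on $[0,z^*]$'' is worded backwards (on $[0,z^*]$ one has $e^{-v}\ge e^{-z^*}$), but the inequality you actually use, namely $F(v)e^{-v}\le F(v)e^{-z^*}$ on each of the two pieces (because $F\le 0$ on $[0,z^*]$ and $F\ge 0$ on $[z^*,\infty)$), is the right one. Combined with $\int_0^\infty F=-\int_0^\infty v\,h=0$ from the mean constraint, this indeed gives $\int_0^\infty(g-g^*)e^{-v}\,dv\le 0$, hence the auxiliary inequality $\int_0^\infty g\,e^{-v}\le \frac{1}{\mu+1}$, with equality for the exponential $g^*$. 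The crossing-structure claim ($0<z_1<z_2<\infty$) is also fine: the endpoint cases $z_1=0$ or $z_2=\infty$ are ruled out by the two constraints $\int h=\int vh=0$ via the obvious sign argument, exactly as you indicate.
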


\subsection{Proof of Theorem \ref{thm:stab-sec}}

Throughout, we assume that $a$ is a unit vector in $\R^{n+1}$ with $\sum a_j = 0$ and, {\red without loss of generality,}
$a_1 \geq \dots \geq a_l > 0 > a_{l+1} \geq \dots \geq a_{n+1}$.

We let
\[
u = a_1, \qquad v = -a_{n+1},
\]
as they play a prominent role in the ensuing analysis. Recall definition \eqref{eq:def-delta},
\[
\delta = \delta(a) = \left|a - \frac{e_1-e_{n+1}}{\sqrt{2}}\right|^2 = \left|u - \frac{1}{\sqrt{2}}\right|^2 +  \left|v - \frac{1}{\sqrt{2}}\right|^2 + \sum_{j=2}^n a_j^2 = 2 - \sqrt{2}(u+v).
\]
When $n=1$, $u = v = \frac{1}{\sqrt{2}}$ and $\delta(a) = 0$, we plainly have equality in \eqref{eq:stab-main} and there is nothing to prove. We shall now assume that $n \geq 2$ and break the proof into two main cases, depending on whether $\delta(a)$ is small or bounded away from $0$ by a constant.

\subsubsection{Stability near the maximiser: $\delta(a) \leq \frac{1}{2000}$}

Our goal in this part is to show 
\[
p_a(0) \leq \frac{1}{\sqrt{2}} - \frac{1}{10}\sqrt{\delta(a)}.
\]
With hindsight, we define the parameter 
\[
\sigma = \sqrt{1-u^2-v^2}
\]
and we need two different arguments depending on whether $\sigma$ is large relative to $\sqrt{\delta(a)}$ or not. {\red For brevity, we shall write $\delta = \delta(a)$.}

\emph{Case 1: $\sigma \geq \frac{1}{\sqrt2}\sqrt{\delta}$.}
We split the sum $S = \sum_{j=1}^{n+1} a_jX_j$ up into two pieces,
\[
S = X - Y, \qquad X = uX_1 - vX_{n+1}, \qquad Y = -\sum_{j=2}^{n} a_jX_j
\]
and let $f_X$, $f_Y$ be the densities of $X$ and $Y$, respectively. By the independence of $X$ and $Y$, $p_a$, the density of $S$, is the convolution of $f_X(\cdot)$ and $f_Y(-\cdot)$. In particular,
\[
p_a(0) = \int_{\R} f_X \cdot f_Y.
\]
Note that $\Var(Y) = \sum_{j=2}^n a_j^2 = 1- u^2-v^2 = \sigma^2$. Consequently, by Lemma \ref{lm:Bobkov} ($Y$ is log-concave as a sum of independent log-concave random variables), $\|f_Y\|_\infty \leq \frac{1}{\sigma}$.
By Lemma \ref{lm:density2}, $f_X = g_{u,v}$, so Lemma \ref{lm:max-g_ab} with $C = \frac{1}{\sigma}$ gives
\[
p_a(0) \leq \frac{1}{\sigma}\left(1 - \exp\left\{-\frac{\sigma}{u+v}\right\}\right).
\]
By the monotonicity of slopes of the concave function 
\[
x \mapsto 1-e^{-x},
\]
the function 
\[
x \mapsto \frac{1-e^{-x}}{x}
\]
is decreasing on $(0,\infty)$ {\red and} so is the right hand side above as a function of $\sigma$. Thus, using the assumed bound $\sigma \geq \sqrt{\delta/2}$, we get
\[
p_a(0) \leq \frac{1}{\sqrt{\delta/2}}\left(1 - \exp\left\{-\frac{\sqrt{\delta/2}}{u+v}\right\}\right).
\]
Recall $u+v = \frac{2-\delta}{\sqrt{2}}$. Note that for $0 \leq y \leq 1$, we clearly have 
{\red \[
1 - e^{-y} \leq y - \frac{y^2}{2}+\frac{y^3}{6} \leq y - \frac{y^2}{3},
\]}
which applied to $y = \frac{\sqrt{\delta/2}}{u+v} = \frac{\sqrt{\delta/2}}{\sqrt2 - \delta/\sqrt{2}}  < 1$ {\red (since $\delta \leq \frac{1}{2000}$)} yields
\[
p_a(0) \leq \frac{1}{u+v} - \frac{1}{3}\frac{\sqrt{\delta/2}}{(u+v)^2}.
\]
Using in the first term, 
{\red \[
\frac{1}{u+v} = \frac{1}{\sqrt{2}-\delta/\sqrt{2}} < \frac{1}{\sqrt2} + \frac{\delta}{\sqrt{2}}
\]}
(the left hand side is convex in $\delta$, the right hand side is linear and they agree at $\delta = 0$ and $\delta = 1$), whereas in the second term, crudely, $u+v < \sqrt2$, we arrive at
\[
p_a(0) \leq \frac{1}{\sqrt2} + \frac{\delta}{\sqrt{2}} - \frac{\sqrt\delta}{6\sqrt2} \leq \frac{1}{\sqrt2} - \frac{1}{10}\sqrt\delta,
\]
as long as $\delta < \left(\frac16 - \frac{\sqrt2}{10}\right)^2 = 0.00063..$. This concludes the argument for Case 1.

\emph{Case 2: $\sigma < \frac{1}{\sqrt2}\sqrt{\delta}$.} Without loss of generality, we can assume that $u \geq v$ (by changing $a$ to $-a$, if necessary, which leaves $p_a(0)$ unchanged). By Lemma \ref{lm:a1},
\[
p_a(0) \leq \frac{1}{2u}.
\]
We shall argue that this is at most the desired $\frac{1}{\sqrt2} - \frac{1}{10}\sqrt{\delta}$. It is convenient to work with new rotated (nonnegative) variables $s = \frac{u+v}{\sqrt{2}}$ and $t = \frac{u-v}{\sqrt{2}}$, so that $s^2 + t^2 = u^2+v^2$. Then, the imposed constraint $\sigma < \frac{1}{\sqrt2}\sqrt{\delta}$ becomes
\[
1-s^2-t^2 < \frac12\delta = 1-s.
\]
This gives the lower bound $t > \sqrt{s(1-s)}$. Consequently,
\[
p_a(0) \leq \frac{1}{2u} = \frac{1}{\sqrt{2}(s+t)} < \frac{1}{\sqrt2(s+\sqrt{s(1-s)})}.
\]
Finally, observe that
\[
s+\sqrt{s(1-s)} = 1 - \tfrac{\delta}{2} + \sqrt{\tfrac{\delta}{2}\left(1- \tfrac{\delta}{2}\right)} > 1 + \tfrac{\sqrt{\delta}}{2},
\]
as $\delta \leq \frac{1}{2000}$, so in this range,
\[
p_a(0) < \frac{1}{\sqrt{2}(1 + \sqrt{\delta}/2)} \leq \frac{1}{\sqrt{2}} - \frac{\sqrt{\delta}}{10},
\]
where the last inequality can be quickly verified by evaluating it only at $\delta = 0$ and another end-point, say $\delta = 1$ (using that the left hand side is convex in $\sqrt{\delta}$, whereas the right hand side is linear).

In either case, we have obtained $p_a(0) \leq \frac{1}{\sqrt{2}} - \frac{1}{10}\sqrt{\delta(a)}$.

\subsubsection{Stability away from the maximiser: $\delta(a) > \frac{1}{2000}$.}
Our goal here is to show a constant factor improvement,
\begin{equation}\label{eq:delta-large-goal}
p_a(0) \leq \frac{1-4\cdot 10^{-5}}{\sqrt{2}}.
\end{equation}

As before, we can assume that $u \geq v$. In particular,
\[
v \leq \frac12\left(u+v\right) = \frac12\left(\sqrt{2}-\delta(a)/\sqrt{2}\right) < \frac{1-\frac{1}{4000}}{\sqrt{2}},
\]
and since $a_{n+1} = -v$ is the smallest (negative) weight, we also have for each $j$,
\begin{equation}\label{eq:all-small}
a_j \geq -v \geq -\frac{1-\frac{1}{4000}}{\sqrt{2}}.
\end{equation}
We fix a small positive constant $\eta < \frac12$ (specified soon) and use different arguments depending on the value of $u$.

\emph{Case 1: $u \leq \sqrt{\frac12 + \eta}$.} There is a further dichotomy in our argument: if all weights $a_j$ have magnitudes bounded away from $\frac{1}{\sqrt2}$ (the negative weights do, by \eqref{eq:all-small}), we employ the Fourier-analytic bound from Lemma \ref{lm:Fourier}, otherwise, {\red if there} is a positive weight $a_k$ close to $\frac{1}{\sqrt2}$, we can pair up its summand $a_kX_k$ with $a_1X_1$ and get a good bound from Lemma \ref{lm:density2max}. Specifically, we proceed as follows.

\emph{Case 1.1: ${\red a_2} \leq \frac{1-\frac{1}{4000}}{\sqrt{2}}$.} Let $m = \max_{j>1} |a_j|$. By virtue of \eqref{eq:all-small}, we have $m\leq \frac{1-\frac{1}{4000}}{\sqrt{2}}$. Lemma \ref{lm:Fourier} yields
\begin{align*}
\sqrt{2}p_a(0)  \leq \Psi(u^2)^{u^2}\prod_{j>1} \Psi(a_j^2)^{a_j^2} &\leq \Psi\left(u^2\right)^{u^2}\Psi\left(m^2\right)^{\sum_{j>1} a_j^2} \\
&= \Psi\left(u^2\right)^{u^2}\Psi\left(m^2\right)^{1-u^2} \\
&\leq  \Psi\left(\tfrac12 + \eta\right)^{u^2}\Psi\left(m^2\right)^{1-u^2},
\end{align*}
using also the monotonicity of $\Psi$. {\red Recall that $\Psi(\frac12) = 1$.} Since $\Psi\left(\tfrac12 + \eta\right) > 1$  and $\Psi(m^2) < 1$, the right hand side is increasing in $u^2$, so we get a further upper bound by replacing it with $\frac{1}{2}+\eta$,
\[
\sqrt{2}p_a(0) \leq \Psi\left(\tfrac12 + \eta\right)^{\tfrac12 + \eta}\Psi\left(m^2\right)^{\tfrac12 - \eta} \leq  \Psi\left(\tfrac12 + \eta\right)^{\tfrac12 + \eta}\Psi\left(\tfrac12 - \tfrac{1}{8000}\right)^{\tfrac12 - \eta}.
\]
Using the explicit form of $\Psi$ from Lemma \ref{lm:Fourier}, {\red $\Psi(x) = \frac{1}{\sqrt{2\pi x}}\frac{\Gamma\left(\frac{1}{2x}-\frac{1}{2}\right)}{\Gamma\left(\frac{1}{2x}\right)}$,} it can be checked numerically that the right hand side is less than $1 - 5\cdot 10^{-5}$ for $\eta = 5\cdot 10^{-5}$. This gives \eqref{eq:delta-large-goal}.

\emph{Case 1.2: ${\red a_2} > \frac{1-\frac{1}{4000}}{\sqrt{2}}$.} We split the sum $S = \sum_{j=1}^{n+1} a_jX_j$ as
\[
S = X + Y, \qquad X = a_1X_1 + {\red a_2X_2}, \qquad Y = \sum_{{\red j > 2}} a_jX_j.
\]
Let $f_X$ and $f_Y$ be the densities of $X$ and $Y$, respectively. By H\"older's inequality and Lemma \ref{lm:density2max}, we obtain
\[
p_a(0) = (f_X \star f_Y) (0) \leq \|f_X\|_\infty \|f_Y\|_1 = \|f_X\|_\infty \leq \frac{1}{e\min\{a_1, {\red a_k}\}} = \frac{1}{e{\red a_2}},
\]
where $f_X \star f_Y$ denotes the convolution of $f_X$ and $f_Y$.
The bound ${\red a_2} > \frac{1-\frac{1}{4000}}{\sqrt{2}}$ gives \eqref{eq:delta-large-goal} with a generous margin.

\emph{Case 2: $u > \sqrt{\frac12 + \eta}$.} Readily, by Lemma \ref{lm:a1},
\[
p_a(0) \leq \frac{1}{2u} < \frac{1}{\sqrt{2+4\eta}} = \frac{1}{\sqrt{2}}\frac{1}{\sqrt{1+10^{-4}}} < \frac{1-4\cdot10^{-5}}{\sqrt{2}},
\]
which gives \eqref{eq:delta-large-goal} and finishes the proof.

\section{Proof of Lipschitzness: Theorem \ref{thm:sec-codim1}}

\subsection{Auxiliary results}

We begin by recalling three results. The first one is a well-known fact that sections of isotropic convex bodies are roughly constant (going back to Hensley, \cite{Hen-gen}).

\begin{lemma}\label{thm:Hensley}
Let $1 \leq \ell < n$. Let $K$ be an isotropic convex body in $\R^n$. For every codimension $\ell$ subspace $E$ in $\R^n$, we have
\[
\frac{1}{\sqrt{2\pi e^3}}\frac{1}{L_K} \leq \vol_{n-\ell}(K \cap E)^{1/\ell} \leq \frac{\sC_\ell}{L_K}.
\]
The constant $\sC_\ell$ is defined in \eqref{eq:Cl}.
\end{lemma}
\begin{proof}
To sketch a standard argument, consider $f(x) = \vol_{n-\ell}(K\cap (x + E))$, $x \in E^\perp$, which defines a log-concave probability density on $E^\perp \simeq \R^\ell$ with mean $0$ and covariance matrix $L_K^2I$, since $K$ is isotropic. Then $\tilde f(x) = L_K^\ell f(L_K x)$ is isotropic and the upper bound follows from the definition of $\sC_\ell$, $\vol_{n-\ell}(K \cap E) = f(0) = L_K^{-\ell}\tilde f(0) \leq L_K^{-\ell}\|\tilde f\|_\infty \leq L_K^{-\ell}\sC_\ell^\ell$. 
The lower bound follows from the folklore result that for an arbitrary probability density $g$ on $\R^\ell$ with $\int_{\R^\ell}|x|^2g(x) \dd x = \ell$, we have $\|g\|_\infty \geq \frac{1}{(\ell+2)^{\ell/2}\vol_\ell(B_2^\ell)}$ (by a standard comparison argument with the uniform distribution on a Euclidean ball, see e.g. Lemma 6 in \cite{Ball}). Moreover, for a log-concave density with mean $0$, $\|f\|_\infty \leq e^\ell f(0)$ (see, e.g. Fradelizi's work \cite{Fr}). Combining the two bounds yields the lower bound.
\end{proof}

The second result we shall need is a functional version of Busemann's theorem from \cite{Bus}, essentially due to Ball (who however assumed symmetry).

\begin{theorem}[Ball, \cite{Ball}]\label{thm:Ball}
Let $f\colon \R^\ell \to [0,+\infty)$ be a log-concave function satisfying $0 < \int_{\R^\ell} f < \infty$. Define
\[
N(x) = \frac{1}{\int_0^\infty f(tx) \dd t}, \qquad x \in \R^\ell, \ x \neq 0,
\]
extended with $N(0) = 0$. Then $N$ satisfies, $0 < N(x) < \infty$ and $N(\lambda x) = \lambda N(x)$ for all $x \neq 0$, $\lambda > 0$, as well as $N(x+y) \leq N(x) + N(y)$, for all $x, y \in \R^\ell$.
\end{theorem}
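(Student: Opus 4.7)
The plan is to follow Ball's original strategy \cite{Ball}, verifying that the evenness/symmetry assumption there is inessential: log-concavity of $f$ alone suffices. Positive homogeneity $N(\lambda x)=\lambda N(x)$ for $\lambda>0$ follows at once from the substitution $s=\lambda t$ in $I(x):=\int_0^\infty f(tx)\,\dd t$, yielding $I(\lambda x)=\lambda^{-1}I(x)$. For the sandwich $0<N(x)<\infty$, equivalently $0<I(x)<\infty$: the finiteness $I(x)<\infty$ is a standard consequence of the exponential decay of integrable log-concave functions along rays, while $I(x)>0$ requires that the ray $\{tx:t>0\}$ meet $\{f>0\}$, which in our intended applications (where $0$ lies in the interior of $\mathrm{supp}\,f$) is automatic.

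The heart of the matter is subadditivity. By positive homogeneity it is equivalent to prove $N(\lambda x+\mu y)\le \lambda N(x)+\mu N(y)$ for $\lambda+\mu=1$, $\lambda,\mu>0$, i.e.,
\[
I(\lambda x+\mu y)\ \ge\ \left(\frac{\lambda}{I(x)}+\frac{\mu}{I(y)}\right)^{-1}.
\]
Using $I(\sigma w)=\sigma^{-1}I(w)$ to rescale $x\mapsto I(x)\,x$ and $y\mapsto I(y)\,y$, and adjusting the weights $(\lambda,\mu)$ accordingly, this reduces to the normalized claim: \emph{if $I(x)=I(y)=1$ and $\lambda+\mu=1$, then $I(\lambda x+\mu y)\ge 1$}. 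The crucial geometric identity is that for any $a,b>0$, with the weighted harmonic mean $t=ab/(\lambda b+\mu a)$,
\[
t(\lambda x+\mu y)\ =\ \frac{\lambda b}{\lambda b+\mu a}\,(ax)+\frac{\mu a}{\lambda b+\mu a}\,(by)
\]
is a genuine convex combination of $ax$ and $by$, so log-concavity of $f$ gives the pointwise bound
\[
f\bigl(t(\lambda x+\mu y)\bigr)\ \ge\ f(ax)^{\lambda b/(\lambda b+\mu a)}\,f(by)^{\mu a/(\lambda b+\mu a)}.
\]

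The remaining step is to convert this pointwise-at-harmonic-mean comparison into the integral inequality $\int_0^\infty f(t(\lambda x+\mu y))\,\dd t\ge 1$. This can be done via a one-dimensional Borell--Brascamp--Lieb inequality, with the $(-1)$-concavity calibration matching the harmonic-mean substitution, or equivalently by the change of variables $a\mapsto 1/a$, $b\mapsto 1/b$, which linearises the harmonic mean into an arithmetic mean and brings the inequality into a standard Prékopa--Leindler form. The main technical delicacy, and the step I expect to be the main obstacle, is handling the $(a,b)$-dependent exponents $\lambda b/(\lambda b+\mu a)$ and $\mu a/(\lambda b+\mu a)$ cleanly in this conversion. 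Crucially, nothing in the whole chain uses $f$ being even or symmetric — only log-concavity — so Ball's symmetric theorem extends to the asymmetric setting without modification.
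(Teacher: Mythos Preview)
The paper does not supply a proof; it cites Ball \cite{Ball} and remarks (with pointers to \cite{Bob, CFPP}) that the evenness hypothesis is inessential. Your outline tracks Ball correctly through the homogeneity reduction to $I(x)=I(y)=1$ and the pointwise harmonic-mean bound from log-concavity. The gap is precisely the step you flag: neither proposed completion works as stated. Borell--Brascamp--Lieb requires \emph{fixed} exponents in its hypothesis, whereas yours, $\lambda b/(\lambda b+\mu a)$ and $\mu a/(\lambda b+\mu a)$, vary with $(a,b)$; and the ``$p=-1$'' calibration is the degenerate endpoint in dimension one. The substitution $a\mapsto1/a$, $b\mapsto1/b$ does linearise the harmonic mean to $s=\lambda u+\mu v$, but the Jacobian factors $u^{-2},v^{-2},s^{-2}$ do not combine with the still-variable exponents $\lambda u/s,\,\mu v/s$ into a Pr\'ekopa--Leindler hypothesis for the transformed densities.

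What closes the argument --- and this is Ball's actual device, independent of evenness --- is a one-dimensional transport: for $r\in(0,1)$ define $a(r),b(r)$ by $\int_0^{a(r)}f(sx)\,\dd s=\int_0^{b(r)}f(sy)\,\dd s=r$, set $t(r)=(\lambda/a(r)+\mu/b(r))^{-1}$, and check pointwise that $f\bigl(t(r)(\lambda x+\mu y)\bigr)\,t'(r)\ge1$. With $\alpha=t/a$, $\beta=t/b$ (so $\lambda\alpha+\mu\beta=1$), after applying your log-concavity bound and weighted AM--GM to $t'(r)=t^2\bigl(\lambda a'/a^2+\mu b'/b^2\bigr)$, this reduces to $\alpha^{\lambda\alpha}\beta^{\mu\beta}\ge1$, which is Jensen for the convex map $x\mapsto x\log x$. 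Integrating over $r\in(0,1)$ gives $I(\lambda x+\mu y)\ge1$. Your side remark on $I(x)>0$ is also apt: the statement as written tacitly needs $0\in\mathrm{supp}\,f$, which holds in the paper's application.
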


Ball stated this result with the additional assumption on $f$ being \emph{even}, in which case $N$ is in fact a norm, but it is implicit in his proof that the above holds as well (see also Theorem 2.1 and Remark 2.6 in \cite{Bob}, as well as Theorem 3.1 in \cite{CFPP}, {\red or Theorem 12 in \cite{CK}}).

Finally, we shall need an estimate on the proportion of mass of centred log-concave functions retained on half-lines.

\begin{theorem}[Meyer-Nazarov-Ryabogin-Yaskin, \cite{MNRY}]\label{thm:MNRY}
Let $f\colon \R^\ell \to [0,+\infty)$ be a log-concave function satisfying $0 < \int_{\R^\ell} f < \infty$ and $\int_{\R^\ell} xf(x) \dd x = 0$. Then for every unit vector $\theta$ in $\R^\ell$, we have
\[
\int_0^\infty f(t\theta) \dd t \geq e^{-\ell}\int_{-\infty}^\infty f(t\theta) \dd t.
\]
\end{theorem}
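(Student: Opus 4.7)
My plan is to reduce the $\ell$-dimensional inequality to a one-dimensional Grünbaum-type statement for log-concave functions using the ``Ball body'' construction provided by Theorem~\ref{thm:Ball}, and then to exploit the centering hypothesis on $f$ to control the geometry of that body.

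First, I would apply Theorem~\ref{thm:Ball} to $f$ itself to obtain the sublinear (positively homogeneous and subadditive) function
\[
N(x) \;=\; \frac{1}{\int_0^\infty f(tx)\, dt}, \qquad x \in \R^\ell \setminus \{0\}.
\]
The desired inequality is equivalent to
\[
N(\theta) \leq (e^\ell - 1)\, N(-\theta),
\]
and by the $\theta \leftrightarrow -\theta$ symmetry of the claim it suffices to establish this in the case $N(\theta) \geq N(-\theta)$.

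Next, I would introduce the convex body $K_f$ with radial function $\rho_{K_f}(\phi) = \bigl(\ell \int_0^\infty r^{\ell-1} f(r\phi)\, dr/f(0)\bigr)^{1/\ell}$; the variant of Ball's theorem with weight $r^{\ell-1}$ gives convexity of $K_f$ together with $\vol_\ell(K_f) = \int f/f(0)$. The centering condition $\int x f(x)\, dx = 0$, combined with Fradelizi's inequality $\|f\|_\infty \leq e^\ell f(0)$ (which follows from log-concavity together with $f$ being centered), forces the centroid of $K_f$ to be quantitatively close to the origin. Grünbaum's chord inequality for convex bodies then bounds the ratio of the two half-chord-lengths of $K_f$ through $0$ in the direction $\theta$ by a dimensional constant. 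Finally, a Berwald/Borell-type moment comparison for the $1$-dimensional log-concave function $r \mapsto f(r\theta)$ translates this chord estimate (which concerns the $r^{\ell-1}$-weighted line integral) back into the unweighted line integral appearing in $N$, producing the bound $N(\theta)/N(-\theta) \leq e^\ell - 1$.

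The main obstacle is the last step: the conversion between the weighted integral $\int_0^\infty r^{\ell-1} f(r\theta)\, dr$ that defines the radial function of $K_f$ and the unweighted integral $\int_0^\infty f(r\theta)\, dr$ that defines $N(\theta)$ must be carried out with the correct constant, so that the dimensional factor in Grünbaum's theorem (essentially $\ell$) upgrades to the sharp exponential factor $e^\ell - 1$. This is precisely where the extremizer, a one-sided exponential of the form $f(x) = e^{-\|x\|_K}\mathbf{1}_{\{-\mathbf{1}_K \leq x\}}$ (a shifted tensor product of exponentials in the sharp example), must be tracked: only by calibrating the Berwald comparison to this extremizer does one obtain the sharp constant $e^{-\ell}$, and verifying this calibration is the delicate heart of the argument.
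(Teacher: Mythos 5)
The paper does not prove this statement at all: it is quoted as a black box from Meyer--Nazarov--Ryabogin--Yaskin \cite{MNRY}, so your sketch has to stand on its own, and it does not. Your opening reduction is fine: with $A=\int_0^\infty f(t\theta)\,\dd t=N(\theta)^{-1}$ and $B=\int_0^\infty f(-t\theta)\,\dd t=N(-\theta)^{-1}$, the claim is indeed equivalent to $N(\theta)\le (e^\ell-1)N(-\theta)$, and the case $N(\theta)\le N(-\theta)$ is trivial. From that point on, however, the argument is a plan rather than a proof, with two concrete problems.

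First, the body you introduce, with $\rho_{K_f}(\phi)^\ell=\ell\int_0^\infty r^{\ell-1}f(r\phi)\,\dd r/f(0)$, is the wrong Ball body for exploiting the hypothesis $\int xf=0$: its centroid involves $\rho_{K_f}^{\ell+1}$, i.e.\ the quantities $\bigl(\int_0^\infty r^{\ell-1}f(r\phi)\,\dd r\bigr)^{(\ell+1)/\ell}$, which are not linear in $f$ and are not controlled by the barycentre of $f$. The body whose centroid sits exactly (not ``quantitatively close to'') at the origin is the degree-$(\ell+1)$ body built with weight $r^{\ell}$, since $\int_{\R^\ell}xf(x)\,\dd x=\int_{S^{\ell-1}}\phi\int_0^\infty r^\ell f(r\phi)\,\dd r\,\dd\phi$; your appeal to Fradelizi's bound $\|f\|_\infty\le e^\ell f(0)$ to locate the centroid of $K_f$ is unsubstantiated, and the relevant classical chord fact is Minkowski's inclusion $-K\subseteq \ell K$ for a body with centroid at the origin, not Gr\"unbaum's volume inequality. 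Second --- and this is the decisive gap, which you yourself flag --- even with the correct body one only obtains a bound on the ratio of the weighted integrals $\int_0^\infty r^{\ell}f(\pm r\theta)\,\dd r$, and the passage back to the unweighted line integrals defining $N$, with the sharp constant $e^\ell-1$, is precisely the hard part of the theorem: no Berwald/Borell comparison is actually stated or verified, a naive two-sided application of such comparisons loses dimensional factors and does not produce $e^{-\ell}$, and one cannot fall back on a one-dimensional Gr\"unbaum argument because the restriction of $f$ to the line $\R\theta$ need not be centred. As it stands the proposal identifies a plausible strategy but does not prove the inequality; for the purposes of Theorem \ref{thm:sec-codim1} the correct move is simply to cite \cite{MNRY}, as the paper does.
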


The case $\ell = 1$ is the classical Gr\"unbaum lemma originating in \cite{Gru}. Interestingly, constant $e^{-\ell}$ is sharp. For further generalisations and extensions, including sharp estimates for sections of convex bodies, see \cite{MSZ}.

\subsection{Proof of Theorem \ref{thm:sec-codim1}}
We follow closely \cite{ABB} which assumes the symmetry of $K$, so we treat in detail only those parts of the argument thanks to which we can omit the symmetry assumption. 

\emph{Case 1: $F^\perp \cap E = \{0\}$.} There is an orthonormal basis $(u_j)_{j=1}^\ell$  of $E$ such that $v_j = \frac{P_F(u_j)}{|P_F(u_j)|}$, $j \leq \ell$, form an orthonormal basis of $F$ (Lemma 4.1 in \cite{GM}). Then $\scal{u_i}{v_j} = |P_F(u_i)|\delta_{ij}$. The idea is to bound the sections swapping one basis vector at a time, that is we consider the sequence of subspaces
\begin{align*}
E_0 &= \text{span}\{u_1, u_2, u_3, \dots, u_\ell\}, \\
E_1 &= \text{span}\{v_1, u_2, u_3, \dots, u_\ell\}, \\ 
E_2 &= \text{span}\{v_1, v_2, u_3, \dots, u_\ell\}, \\
\ldots& \\
E_\ell &= \text{span}\{v_1, v_2, v_3, \dots, v_\ell\},
\end{align*}
so that $E = E_0$, $F = E_\ell$, and plainly,
\[
\big| \vol_{n-\ell}(K \cap E^\perp) - \vol_{n-\ell}(K \cap F^\perp) \big| \leq \sum_{j=1}^\ell \big| \vol_{n-\ell}(K \cap E_j^\perp) - \vol_{n-\ell}(K \cap E_{j-1}^\perp) \big|.
\]
We fix $j \leq \ell$ and bound the $j$-th term. By the construction of the bases $(u_i)$ and $(v_i)$, the subspaces $E_{j-1}^\perp$ and $E_j^\perp$ are of the form ${\red \bar E^\perp} \oplus a$ and ${\red \bar E^\perp} \oplus b$ with {\red $\bar E$ defined by} ${\red \bar E^\perp} = E_{j-1}^\perp \cap v_j^\perp = E_j^\perp \cap u_j^\perp$ and $a = P_{E_{j-1}^\perp} (v_j)$, $b = -P_{E_j^\perp} (u_j)$. Then, $\lambda = |a|=|b| = \sqrt{1- \scal{u_j}{v_j}^2}$. 
We define functions on ${\red \bar E}$,
\[
f(x) = \vol_{n-\ell-1}\big(K \cap (x + {\red \bar E^\perp})\big), \qquad x \in {\red \bar E}
\]
and
\[
N_+(x) = \frac{1}{\int_0^\infty f(tx) \dd t}, \qquad N_-(x) = \frac{1}{\int_{-\infty}^0 f(tx) \dd t}, \qquad x \in {\red \bar E}, x \neq 0,
\]
both extended by $0$ at $0$. They allow us to avoid assuming that $K$ is symmetric. By Theorem \ref{thm:Ball}, each $N_{\pm}$ satisfies the triangle inequality on ${\red \bar E}$. Introducing the normalised vectors, $\tilde a = \lambda^{-1}a$, $\tilde b = \lambda^{-1}b$, we have,
\begin{align*}
\big| \vol_{n-\ell}(K \cap E_j^\perp) - \vol_{n-\ell}(K \cap E_{j-1}^\perp) \big| &= \left|\frac{1}{N_+(\tilde a)} + \frac{1}{N_-(\tilde a)} - \frac{1}{N_+(\tilde b)} - \frac{1}{N_-(\tilde b)}\right| \\
&\leq \left| \frac{1}{N_+(\tilde a)} - \frac{1}{N_+(\tilde b)} \right| + \left| \frac{1}{N_-(\tilde a)} - \frac{1}{N_-(\tilde b)} \right|.
\end{align*}
Moreover,
\begin{align*}
|N_{\pm}(\tilde a)^{-1} - N_{\pm}(\tilde b)^{-1}| = \left|\frac{N_{\pm}(\tilde a) - N_{\pm}(\tilde b)}{N_{\pm}(\tilde a)N_{\pm}(\tilde b)}\right|.
\end{align*}
By the triangle inequality,
\[
-N_\pm(\tilde b- \tilde a) \leq N_\pm(\tilde a) - N_\pm(\tilde b) \leq N_\pm(\tilde a- \tilde b).
\]
As a result,
\[
|N_{\pm}(\tilde a)^{-1} - N_{\pm}(\tilde b)^{-1}| \leq \frac{\max\left\{N_\pm\left(\frac{\tilde a - \tilde b}{|\tilde a-\tilde b|}\right), N_\pm\left(\frac{\tilde b - \tilde a}{|\tilde a - \tilde b|}\right)\right\}}{N_{\pm}(\tilde a)N_{\pm}(\tilde b)}\lambda^{-1}|a- b|,
\]
using homogeneity and $|\tilde a-\tilde b| = \lambda^{-1}|a- b|$. 
{\red Also since, \[
|a-b| = \sqrt{1- \scal{u_j}{v_j}^2}|u_j-v_j|,
\]} 
we obtain
\[
\lambda^{-1}|a- b| = |u_j-v_j|.
\]
It remains to upper and lower bound $N_\pm(\theta)$
for an arbitrary unit vector $\theta$ in ${\red \bar E}$. We shall do it for $N_+$ and of course the argument for $N_-$ is identical (by changing $f(\cdot)$ to $f(-\cdot)$). By Theorem \ref{thm:MNRY},
\[
\frac{1}{\int_{-\infty}^\infty f(t\theta) \dd t} \leq 
 N_+(\theta) \leq \frac{e^{\ell+1}}{\int_{-\infty}^\infty f(t\theta) \dd t}.
\]
Moreover, $\int_{-\infty}^\infty f(t\theta) \dd t = \vol_{n-\ell}(K \cap ({\red \bar E^\perp} \oplus \theta))$, so Lemma \ref{thm:Hensley} yields
\[
\sC_\ell^{-\ell}L_K^\ell \leq N_+(\theta) \leq e^{\ell+1}(2\pi e^3)^{\ell/2}L_K^\ell.
\]
Putting everything together, we obtain
\begin{align*}
|N_{\pm}(\tilde a)^{-1} - N_{\pm}(\tilde b)^{-1}| \leq (2\pi)^{\ell/2}e^{5\ell/2+1}\frac{\sC_\ell^{2\ell}}{L_K^\ell}|u_j-v_j|.
\end{align*}
Hence, by the Cauchy-Schwarz inequality and a basic estimate $\sqrt{\sum_{j=1}^\ell |u_j-v_j|^2} \leq {\red \sqrt{2\ell}}\cdot d(E,F)$ (see Proposition 2.2 in \cite{ABB}),
\begin{align*}
\big| \vol_{n-\ell}(K \cap E^\perp) - \vol_{n-\ell}(K \cap F^\perp) \big| &\leq \sqrt{2\ell}(2\pi)^{\ell/2}e^{5\ell/2+1}\frac{\sC_\ell^{2\ell}}{L_K^\ell}d(E,F) \\
&\leq  e^{5\ell}\frac{\sC_\ell^{2\ell}}{L_K^\ell}d(E,F).
\end{align*}

\emph{Case 2: $E' = F^\perp \cap E \neq \{0\}$.} Then we have $E = E' \oplus E''$ with $F^\perp \cap E'' = \{0\}$. We choose an orthonormal basis of $E''$ as in Case 1, say $(u_i)_{i=1}^k$, which gives an orthonormal system $v_i = P_F(u_i)/|P_F(u_i)|$ in $F$. We complement it to an orthonormal basis $(v_i)_{i=1}^\ell$ of $F$. We also complement  $(u_i)_{i=1}^k$ to an orthonormal basis $(u_i)_{i=1}^\ell$ of $E$. Note that $\text{span}\{u_i,\, k < i \leq \ell\}$ and $\text{span}\{v_i,\, k < i \leq \ell\}$ are orthogonal. Swapping the $u_i$ by $v_i$, we then define the chain of subspaces $E = E_0, E_1, \dots, E_\ell = F$ as in Case~1 and proceed exactly as therein to bound $\big| \vol_{n-\ell}(K \cap E_j^\perp) - \vol_{n-\ell}(K \cap E_{j-1}^\perp) \big|$. Bounding the terms $j \leq k$ is unchanged, whereas for the terms $j > k$, calculations are even simpler because then $u_j$ and $v_j$ are orthogonal.

\textbf{Acknowledgments.}
We should very much like to thank Giorgos Chasapis and Grigoris Paouris for their comments. We  appreciate the hospitality and excellent working conditions at the Hausdorff Research Institute for Mathematics in Bonn during the programme ``Synergies between modern probability, geometric analysis and stochastic geometry".

\vspace*{-1em}

\end{document}